\def\qed{{\unskip\nobreak\hfil\penalty50
\hskip2em\hbox{}\nobreak\hfil$\square$
\parfillskip=0pt \finalhyphendemerits=0\par}\medskip}
\def\proof{\trivlist \item[\hskip \labelsep{\bf Proof.\ }]}
\def\endproof{\null\hfill\qed\endtrivlist\noindent}
\def\eproof{\null\hfill\qed\endtrivlist\noindent}
\def\tilde{\widetilde}
\def\a{\alpha}
\def\b{\beta}
\def\e{\varepsilon}
\def\l{\lambda}
\def\setminus{\smallsetminus}
\def\A{{\cal A}}
\def\E{{\cal E}}
\def\R{{\cal R}}
\def\I{{\cal I}}
\def\H{{\cal H}}
\def\K{{\cal K}}
\def\S{{\cal S}}
\def\f{{\varphi}}
\def\p{{\pi}}
\def\l{{\lambda}}
\def\PSL{{{\rm PSL}(2,\mathbb R)}}
\def\Diff{{\rm Diff}}
\def\Mob{{\rm\textsf{M\"ob}}}
\def\S2{S^{1(2)}}
\def\Poi{{\cal P}_+^\uparrow}
\def\pPoi{{\cal P}_+}
\def\RR{\mathbb R}
\newtheorem{theorem}{Theorem}[section]
\newtheorem{lemma}[theorem]{Lemma}
\newtheorem{corollary}[theorem]{Corollary}
\newtheorem{proposition}[theorem]{Proposition}
\theoremstyle{remark} 
\newcommand{\ben}{\begin{equation}}
\newcommand{\een}{\end{equation}}
\newcommand{\bthm}{\begin{theorem}}
\newcommand{\ethm}{\end{theorem}}
\newcommand{\bprop}{\begin{proposition}}
\newcommand{\eprop}{\end{proposition}}
\newcommand{\bcor}{\begin{corollary}}
\newcommand{\ecor}{\end{corollary}}
\newcommand{\blem}{\begin{lemma}}
\newcommand{\elem}{\end{lemma}}
\def\setminus{\smallsetminus}
\def\Diff{{\mathrm {Diff}}}
\def\PSL{PSU(1,1)}
\def\ZZ{{\mathbb Z}}
\def\SL2{{{\rm SL}(2,\R)}}
\def\PSL2{{{\rm PSL}(2,\Reali)}}
\def\U1{{{\rm V}(1)}}
\def\SU2{{{\rm SV}(2)}}
\def\SU{{{\rm SU}}}
\def\A{{\mathcal A}}
\def\H{{\mathcal H}}
\def\I{{\mathcal I}}
\def\K{{\mathcal K}}
\def\O{{\mathcal O}}
\def\cP{{\mathcal P}}
\def\S{{\mathcal S}}
\def\cP{{\cal P}}
\def\RR{{\mathbb R}}
\def\a{\alpha}
\def\b{\beta}
\def\d{\delta}        
\def\l{\lambda}       \def\L{\Lambda}
\def\p{\pi}
\def\f{\varphi}
\def\c{\chi}
\def\S{{S(\RR^d)}}
\def\supp{{\text{supp}\,}}
\newcommand{\hoi}{{{H}^{(1)}(I)}}
\newcommand{\hob}{{{H}^{(1)}(B)}}
\newcommand{\hki}{{{H}^{(k)}(I)}}
\newcommand{\hik}{{{H}_{(k)}(I)}}
\title{\Huge{An entropy bound due to symmetries}
} 
\author{ {\sc Roberto  Longo},  {\sc Vincenzo Morinelli}\\
Dipartimento di Matematica,
Tor Vergata Universit\`a di Roma\\
Via della Ricerca Scientifica, 1, I-00133 Roma, Italy
}
\date{}
\begin{document}

\maketitle

\begin{abstract}
Let $H: O\mapsto H(O)$ be a local net of real Hilbert subspaces of a complex Hilbert space $\H$ on the family $\O$ of double cones of the spacetime $\RR^{d+1}$, covariant with respect to a positive energy, unitary representation $U$ of the Poincaré group $\Poi$, with the Bisognano-Wichmann property for the wedge modular group. We set an upper bound on the local entropy $S_H(\phi|\! | C)$ of a vector $\phi\in \H$ in a region $C\subset \RR^{d+1}$ that depends only on $U$ and the PCT anti-unitary canonically associated with $H$. A similar result holds for local, Möbius covariant nets of standard subspaces on the circle. We compute the entropy increase and illustrate this bound for the nets associated with the $U(1)$-current derivatives. 
\end{abstract}

\newpage


\section{Introduction}
Recently, a general notion of entropy $S(\phi|\!| H)$, associated with a vector $\phi$ of a complex Hilbert space $\H$ with respect to a real linear subspace $H\subset\H$, has been provided in \cite{L19, CLR20}. Despite its broad generality and effectiveness, the full meaning of this measure of information has been uncovered only within specific contexts so far. 

We shall recall the definition of $S(\phi|\!| H)$, in terms of the modular theory of standard subspaces, and its main properties in Section \ref{S2}. For the moment, the formula
\[
S(\phi|\!| H) = S(\f_\phi |\!| \f_0)
\]
may serve as a motivation and for an initial understanding of the notion; here, {$S(\f_\phi |\!| \f_0)$} is Araki's relative entropy between the coherent state $\f_\phi$ associated with $\phi\in\H$ and the vacuum state {$\f_0$}, on the second quantization von Neumann algebra $R(H)$ relative to $H$, on the Bose Fock Hilbert space $e^\H$ over $\H$. 

The notion of $S(\phi|\!| H)$ is natural in defining the local entropy of a wave packet \cite{CLR20, LM23}, also on a curved spacetime \cite{CLRR22}, and in the context of signal transmission \cite{L23}. 
We also mention that it may describe the loss of information concerning the arrow of time \cite{BF23}. 

In this paper, we deal with local nets of real linear subspace. We start with a local, Poincaré covariant, net $H : O\mapsto H(O)$ of closed, real linear subspaces of a complex Hilbert space $\H$, see \cite{BGL02}. So, $O$ runs in the family $\O$ of double cones of the Minkowski spacetime $\RR^{d+1}$, $H(O)$ is a real Hilbert subspace of $\H$ associated with $O$, and we are given a unitary, positive energy, representation $U$ of the Poincaré group $\Poi$ acting covariantly on $H$:
\[
U(g)H(O) = H(gO), \ g \in \Poi\, , \ O\in \O
\]
(we assume $d=3$ for simplicity, most of our discussion is valid for any $d>1$);
moreover, $H$ is local:
\[
H(O_1) \subset H(O_2)'\ , \ O_1 , O_2\in \O\ ,
\]
if $O_1 , O_2$ are spacelike separated, with $K'$ denoting the symplectic complement of $K\subset \H$, namely $K' = \{\phi\in\H : \Im(\phi, \psi) = 0 \,  , \psi\in K\}$.

For example, $H$ may be associated with a local, Poincaré covariant net $\A$ of von Neumann algebras on $\H$:
\[
H(O) = \overline{\A(O)_{\rm sa}\Omega}\, ,
\]
with $\A(O)_{\rm sa}$ the selfadjoint part of $\A(O)$ and $\Omega$ the vacuum vector. But $H$ is not always of this kind. 

The real Hilbert subspace $H(C)$ can then be defined for every $C\subset \RR^{d+1}$ as the closed, real linear span generated by all the $H(O)$'s with $O\in\O$ contained in $C$ (setting $H(C) = \{0\}$ if $C$ has empty interior).

With $H$ as a local, Poincaré covariant net on $\H$, we are interested in the local entropy
\[
S(\phi|\!| C) \equiv S\big(\phi|\!| H(C)\big)
\]
of a vector $\phi\in \H$ with respect to $C\subset \RR^{d+1}$. 

We assume that $H$ is non-degenerate in a natural sense; so, by the Reeh-Schlieder theorem, $H(C)$ is cyclic and separating for every region $C\subset \RR^{d+1}$ such that both $C$ and its spacelike complement $C'$ have non-empty interiors, and we may consider the modular operator $\Delta_C = \Delta_{H(C)}$ for such a region. 

We also assume the Bisognano-Wichmann property to hold, that is
\[
 \Delta^{-is}_W = U\big(\L_W(2\pi s)\big)\, , \, s\in \RR\, ,
 \]
for any, thus for all, wedge region $W\subset \RR^{d+1}$, a property that holds in wide generality. 
Here, $\L_W$ the boost one-parameter subgroup of $\Poi$ associated with $W$. 
It follows that there exists a canonical PCT anti-unitary operator $\Theta$ on $\H$ (so $U$ extends canonically to an anti-unitary representation $\tilde U$ of the proper Poincaré group $\pPoi = \Poi \rtimes \ZZ_2$ generated by $\Poi$ and the spacetime reflection $r$ on $\RR^{d+1}$). 

We shall derive the local, universal bound for the entropy
\ben
S_H(\phi|\!| C) \leq S_{\tilde U}(\phi|\!| C)\, ,
\een
which depends only on the anti-unitary representation $\tilde U$ of the symmetry group, and not on $H$. 

An analogous set-up is associated with nets of standard subspaces on other spacetimes. We shall consider the Möbius covariant nets on the real line arising from the $U(1)$-current and its derivatives; we shall obtain explicit formulas for the local modular Hamiltonians and a vector's local entropy. The universal bound is expressed by 
\[
S\big(f|\!| H^{(k)}(-1, 1)\big) =
\pi \int_{-1}^1(1-x^2)f'(x)^2dx - \pi k(k-1)\int_{-1}^1f(x)^2dx  \leq \pi \int_{-1}^1(1-x^2)f'(x)^2dx\, .
\]
Here above, $f$ is a real function in the Schwartz space $S(\RR)$;  the right-hand side is the entropy of $f$ in the interval $(-1,1)$ w.r.t. the $U(1)$-current; the left-hand side is the entropy of $f$ in $(-1,1)$ w.r.t. the $k$-derivative of $U(1)$-current; $f$ is normalized to have zero moments $\int_{-1}^1 x^n f(x)dx$, $n= 0,1,\dots, k-2$. 
Note that the non-negativity of the latter is not obvious from this expression; however, we shall prove this directly. 

Huzihiro Araki first considered, independently, both real linear subspaces of a complex Hilbert space, with the associated von Neumann algebras in second quantization \cite{Ar63}, and the relative entropy between normal states of arbitrary von Neumann algebras \cite{Ar}. Both concepts have been very influential and lie jointly at the basis of this paper that we dedicate to him. 

\section{Entropy and standard subspaces}
\label{S2}
We recall a few facts about standard subspaces and the notion of entropy of a vector with respect to a standard 
subspace. We refer to \cite{L,L19,CLR20} for more details. 

Given a complex Hilbert space $\H$, a closed real linear subspace $H$ of $\H$ is said to be cyclic if $\overline{H+iH}=\H$, separating if $H\cap iH=\{0\}$; $H$ is cyclic iff $H'$ is separating, where $H'$ denotes the \emph{symplectic complement} of $H$ given by 
\[
H' = \big\{\psi\in\H: \Im(\psi,\phi)=0,\, \forall  \phi\in H\big\}\ .
\]
Note that $H'' = H$. $H$ is called \emph{standard} if it is both cyclic and separating. 

With a standard subspace $H$, one associates the involutive, closed, anti-linear \emph{Tomita operator} $S_H$ on $\H$, with domain $H + iH$, defined by
\[
S_H(\phi+i\psi) = \phi-i\psi\, ,\ \phi,\psi\in H\, ,
\] 
The polar decomposition $S_H = J_H \Delta_H$ gives 
an antiunitary operator $J_H$, \emph{the modular conjugation}, and a self-adjoint, positive, nonsingular 
operator $\Delta_H$, \emph{the modular operator},  satisfying the relations
$
J_H=J^*_H \ , \ \ J_H\Delta_H J_H= \Delta^{-1}_H$. 

The unitary \emph{modular group} is the one-parameter unitary group $\Delta^{is}_H$, $s\in \RR$; the crucial relations are
\[
\Delta^{is}_H H= H  \ , \ \ J_H H = H'   \, .
\]  
We say that $H$ is \emph{factorial} if $H\cap H' = \{0\}$.  Given a factorial standard  subspace $H$ of $\H$, the \emph{cutting projection} associated with $H$  is defined as
\[
P_H(\phi+\phi')= \phi \ , \qquad \phi\in H, \ \phi'\in H' \, .
\]
It turns out that $P_H$ is a densely defined, closed, real linear operator satisfying  
\[
P^2_H = P_H \  \ , \ \ P_H^* =P_{iH}  = -iP_H i\ \ , \ \  P_H\Delta^{is}_H=
\Delta^{is}_H P_H \ .
\]
$P_H$ can be expressed in terms of the modular data:
\ben\label{PD}
P_H = (1 + S_H)(1 - \Delta_H)^{-1} = a(\Delta_H) + J_H b(\Delta_H) \, ,
\een
with $a(\l) = (1 - \l)^{-1}$ and $b(\l) = \l^{1/2}a(\l)$ (note that $1$ is not in the point spectrum of $\Delta_H$ by the factoriality assumption).

With $H$ a factorial standard subspace, the \emph{entropy of a vector $\phi\in\H$ w.r.t. $H$} is defined by
\ben\label{Sf}
S(\phi |\!| H) = -\Im(\phi, P_H i\log\Delta_H \phi)\, ,
\een
in the quadratic form sense (see Sect. \ref{Evc}). 
If $H$ is not factorial, the definition \eqref{Sf} still makes sense as
\[
S(\phi |\!| H) = -i\Im\big(\phi, a(\Delta_H) \log\Delta_H \phi\big)  +i\Im\big(\phi, J_H b(\Delta_H) \log\Delta_H \phi\big) \, ,
\]
which is well defined because $a(\l)\log\l$ is not singular at $\l =1$. 

In general, if $H\subset \H$ is any closed, real linear subspace, we set $S(\phi |\!| H) = S(\phi_s |\!| H_s)$, where $H_s$ is the (standard) component of $H$ in $\H_s =(H\cap iH)^{\perp_\mathbb R}\cap \overline{H+iH}$, and $\phi_s$ is the orthogonal projection of $\phi$ on $\H_s$. 
We thus may, and will, assume the considered subspaces to be standard and factorial. 

Some of the main properties of the entropy of a vector are:
\begin{itemize}
\item $S(\phi |\!| H)\geq 0$ or $S(\phi |\!| H) = +\infty$ (\emph{positivity}); 
\item If $K \subset H$, then $S(\phi |\!| K)\leq S(\phi |\!| H)$ (\emph{monotonicity});
\item If $\phi_n  \to \phi$, then $S(\phi |\!| H) \leq \liminf_n S(\phi_n |\!| H)$ (\emph{lower semicontinuity});
\item If $H_n\subset H$ is an increasing sequence with $\overline{\bigcup_n H_n} = H$, then $S(\phi |\!| H_n) \to S(\phi |\!| H)$;
(\emph{monotone continuity});
\item $S(\phi + \psi |\!| H) = S(\phi |\!| H)$ if $\psi\in H'$, moreover $S(\psi |\!| H)= 0$ iff $\psi\in H'$ (\emph{locality});
\item $S(U\phi |\!| U H) = S(\phi |\!| H)$, $U$ unitary of $\H$ to a Hilbert space $\K$ (\emph{unitary invariance}). 
\end{itemize}
Here, $\phi, \phi_n, \psi\in\H$ and $H,H_n$ are closed, real linear subspaces. The locality property will be proved in Prop. \ref{locality}. 

\section{Nets of standard subspaces}
Let $\H$ be a complex Hilbert space, and $\O$ the family of double cones of the Minkowski spacetime $\RR^{d+1}$. A local Poincaré covariant {\it net of real linear subspaces} of $\H$ is a map
\[
O\in \O \mapsto H(O)\subset \H \, ,
\]
with $H(O)$ a real linear subspace of $\H$, that we assume to be closed (we may otherwise replace $H(O)$ by its closure),
that satisfies the following:
\begin{itemize}
\item $O_1 \subset O_2 \implies H(O_1) \subset H(O_2)$, $O_1 , O_2\in\O$ (\emph{isotony}); 
\item $O_1 \subset O'_2 \implies H(O_1) \subset H(O_2)'$, $O_1 , O_2\in\O$ (\emph{locality});
\item There exists a unitary, positive energy representation $U$ of $\Poi$ on $\H$ such that $U(g)H(O) = H(gO)$, $O\in\O$, $g\in\Poi$ (\emph{Poincaré covariance});
\item For every $O\in\O $, $\overline{\sum_{x\in\RR^{d+1}}H(O+x)}= \H$ (\emph{non-degeneracy}). 
\end{itemize}
As said, given a net $H$ as above, we define $H(C)\subset \H$ for every region $C$ as
the closed, real linear span of all $H(O)$'s with $O\subset C$. 
\bthm \emph{(Reeh-Schlieder theorem).}
$H(C)$ is cyclic for every $C\subset\RR^{d+1}$ with non-empty interior. Therefore, $H(C)$ is standard (i.e., cyclic and separating) if both $C$ and its spacelike complement $C'$ have a non-empty interior. 
\ethm
\proof
The argument is classical. 
Let $C$ have a non-empty interior, so there exists a double cone $O\in\O $ such that $O + x \subset C$ for $x\in\RR^{d+1}$ in a neighbourhood $\cal U$ of $0$. If $\phi\in H(C)'$, then $\phi\in H(O +x)'$, $x\in\cal U$; so 
$F(x) \equiv\Im(\phi, U(x)\psi) = 0$ for every $\psi \in H(O)$, $x\in \cal U$. By the positivity of the energy, $F$ is the boundary value of a function analytic in 
$\RR^{d+1} + i V_+$, with $V_+$ the forward light cone; so $F$ is identically zero as it vanishes on $\cal U$. We conclude that $\psi$ is in the symplectic complement of $\sum_{x\in \RR^{d+1}}H(O+ x)$, so $\psi =0$, because $H$ is assumed to be non-degenerate. 
\endproof
By the Reeh-Schlieder theorem, given $C$ such that both $C$ and $C'$ have a non-empty interior, we may consider the modular operator and the modular conjugation
\[
\Delta_C = \Delta_{H(C)}\, ,\ \quad J_C = J_{H(C)}\, .
\]
The following property plays a crucial role:
\begin{itemize}
\item For every wedge region $W\subset\RR^{d+1}$,
\ben\label{BW}
\Delta^{-is}_W = U\big(\L_W(2\pi s)\big)\, , \ s\in\RR\, ,
\een
\emph{(Bisognano-Wichmann property)}. 
\end{itemize} 
Here, $\L_W$ is the one-parameter boost subgroup of $\Poi$ leavings $W$ globally invariant. By Poincaré covariance, the Bisognano-Wichmann property holds for all wedges if it holds for one given wedge. 
We mention that the Bisognano-Wichmann property holds automatically in some instances. In particular,
if $H$ is a local Poincaré covariant, net of real linear spaces, and the unitary representation $U$ is
a direct integral of any scalar representations, then the Bisognano-Wichmann property holds for $H$, see \cite[Thm. 4.4]{M16}. 
\bthm\label{pP}
Let $H$ be a net as above with the Bisognano-Wichmann property. Then:
\begin{itemize}
\item[$(i)$] The representation $U$ of $\Poi$ is unique;
\item[$(ii)$]  For every wedge $W$,
\[
H(W)' = H(W') 
\]
(\emph{Wedge duality});
\item[$(iii)$]  If $U$ does not contain the identity representation, then $H(W)$ is factorial for every wedge $W$, namely $H(W)\cap H(W)' = \{0\}$;
\item[$(iv)$]The anti-unitary $\Theta$ given by $J_W U(R_W)$ is independent on the wedge $W$, with zero in the edge of $W$;  
$\Theta$ is an involution and satisfies 
\[
\Theta H(O) = H(-O)\, ,\quad O\in\O\, ,\quad {\rm and}\quad \Theta U(g) \Theta = U(rgr)\, , \ g\in\Poi\, ,
\]
(\emph{PCT}). Here, $R_W$ is the space $\pi$-rotation preserving $W$ and $r$ is the spacetime reflection $r: x\mapsto -x$. 

Therefore, $J_W = \Theta U(R_W)$ acts geometrically on $H$ as the reflection on $\RR^{d+1}$ around the edge of $W$. 
\end{itemize}
\ethm
\proof
$(i)$: The condition \eqref{BW} fixes the restrictions of $U$ to $\L_W$; since the group generated by the boost subgroups $\L_W$'is $\Poi$, the representation $U$ is fixed. 

$(ii)$: If $K$ is a standard subspace, then $\Delta_{K'} = \Delta_K^{-1}$, so
the standard subspaces $H(W')'\supset H(W)$ share the same modular unitary group $U\big(\L_W(-2\pi t)\big)$, so they coincide \cite{L}. 

$(iii)$: If $K$ is a standard subspace, the center $K\cap K'$ of $K$ is equal to the fixed-vector subspace for the modular group $\Delta_K^{it}$. By the Bisognano-Wichmann property,  the center of $H(W)$ is equal to the $U\big(\L_W(\cdot)\big)$-fixed points. Assuming that $U$ has no non-zero fixed vector, it follows that $U\big(\L_W(\cdot)\big)$ has no non-zero fixed vector, so $H(W)$ is factorial. 

$(iv)$: This follows along the same lines of \cite{BGL94} and \cite[Thm. 2.10]{GL95}.
\eproof
Let $\pPoi$ denote the proper Poincaré group; $\pPoi$ is generated by $\Poi$ and the spacetime inversion $r$, indeed $\pPoi$ is the semi-direct 
product $\Poi \rtimes \ZZ_2$, with $r$ the non-identity element of $\ZZ_2$.  
An anti-unitary representation $\tilde U$ of $\pPoi$ is a representation of $\pPoi$ by either unitary or anti-unitary operators, 
such that $\tilde U|_{\Poi}$ is unitary and $\tilde U(r)$ is anti-unitary. 

$(iv)$ of Thm. \ref{pP} can be rephrased as follows.
\bcor
Let $H$ be a net as above. The unitary representation $U$ of $\Poi$ extends canonically to an anti-unitary representation $\tilde U$ of $\pPoi$ acting covariantly on $H$. 
\ecor
\proof
Theorem 2.3 in \cite{BGL94} proves a similar statement (with the weaker assumption of modular covariance) in the setting of local, Poincaré covariant nets of von Neumann algebras. Our corollary can be proved following the same lines. 
\eproof

\subsection{The dual net}
Let $H$ be a local Poincaré covariant, net of real linear spaces with the Bisognano-Wichmann property. We define the \emph{dual net}
$H^d$ by setting
\ben
H^d(O) = H(O')'\, ,\quad O\in\O\, ,
\een
(intersection over all wedges containing $O$). 

By locality, $H(O')\subset H(O)'$, therefore
\ben
H(O) \subset H^d(O)\, ,\quad O\in\O\,  .
\een
\bprop
$H^d$ is a local, Poincaré covariant net of real linear spaces with the Bisognano-Wichmann property.
\eprop
\proof
The isotony of $H^d$ is clear: if $O_1\subset O_2$, then  $O'_1\supset O'_2$, so
$H(O'_1)\supset H(O'_2)$ because $H$ is isotonous; thus $H^d(O_1)\subset H^d(O_2)$ as taking the symplectic complement reverses the inclusion order. 

Note now that
 \ben\label{w1}
H^d(W) = H(W)\, ,\quad W \ {\rm wedge}\, ,
\een 
as $H(W')' = H(W)$ by wedge duality. Therefore
 \ben\label{w3}
H^d(O) = \bigcap_{W\supset O} H(W)\, ,\quad O\in\K\, ,
\een
($W$ wedge); indeed $H^d(O) = H(O')' = \left(\sum_{W\subset O'} H(W)\right)' = \bigcap_{W\supset O} H(W)$. 

If the double cones $O_1, O_2$ are spacelike separated, then there exists a wedge $W$ such that $O_1\subset W$, $O_2\subset W'$; thus 
\[
H^d(O_1) \subset H(W) = H(W')' \subset H^d(O_2)'
\]
and locality holds for $H^d$. 

The Poincaré covariance, non-degeneracy, and Bisognano-Wichmann properties for $H^d$ follow immediately from the corresponding properties of $H$ and \eqref{w1}. 
\endproof
We shall say that the net $H$ satisfies \emph{Haag duality} if
\[
H(O)' = H(O')\, ,\quad O\in \O \, .
\]
\bprop
The dual net $H^d$ satisfies Haag duality. Therefore, $H = H^d$ iff $H$ satisfies Haag duality.  
\eprop
\proof
Note that, if $W$ is a wedge and $O\subset W$ is a double cone, then $H^d(W) = \overline{\sum_{O\subset W} H^d(O)}$, because $H(O)\subset H^d(O) \subset H(W)$. 

Fix now $O\in\O $. By \eqref{w3}, we have
\[
H^d(O)'  = \overline{\sum_{W\subset O'} H(W)} = \overline{\sum_{W\subset O'} H^d(W)} = \overline{\sum_{O_1\subset O'} H^d(O_1)} = H^d(O')
\]
($W$ wedge, $O_1$ double cone), so $H^d$ is Haag dual. The rest is clear. 
\eproof
It follows that $H^d$ is the maximal extension of $H$ on $\H$ that is relatively local with respect to $H$. 

The following theorem follows from \cite[Thm 4.7]{BGL02} and \cite[Thm. 6.1]{LMR16}. 
\bthm\label{UP}
With $\H$ a complex Hilbert space, there exists a one-to-one correspondence between:
\begin{itemize}
\item[$(a)$] Anti-unitary, positive energy, representations of $\tilde U$ of $\pPoi$ on $\H$ such that $U = \tilde U|_{\Poi}$ does not contain infinite spin subrepresentations;
\item[$(b)$] Poincaré covariant, Haag dual nets $H$ of real linear subspaces on $\H$ with the Bisognano-Wichmann property \eqref{BW}. 
\end{itemize}
\ethm
\proof
Given a net in $(b)$, the covariance unitary representation $U$ of $\Poi$, the anti-unitary representations of $\tilde U$ of $\pPoi$ is generated by $U$ and the modular conjugation $J_W$ of any wedge $W$ whose edge contains the origin \cite{BGL94}. $U$ does not contain infinite spin subrepresentations because the local real Hilbert subspaces $H(O)$, $O\in\O$, are cyclic \cite{LMR16}. 

Conversely, given an anti-unitary representation $\tilde U$ as in $(a)$, the standard subspace $H(W)$ associated with a wedge $W$ is defined as the fixed-point real linear space of the operator $S_W = J_W\Delta_W^{1/2}$, where $\Delta_W$ is defined by eq. \eqref{BW} 
and $J_W$ by $(iii)$ of Thm. \ref{pP} \cite{GL95}. Then $H(O)$ is defined by \eqref{w3}. 
\eproof
\bcor\label{UD}
Let $H$ be a local, Poincaré covariant net real linear subspaces on $\H$ with the Bisognano-Wichmann property and $U$ the unitary covariance representation of $\Poi$. 

Then the dual net $H^d$ depends only on the anti-unitary $\tilde U$ of $\pPoi$ and not on $H$. 
\ecor
Indeed, $H^d$ is the local net corresponding to the unitary representation $\tilde U$ of $\pPoi$ in Theorem \ref{UP}. 

Let $\A$ be a local, Poincaré covariant net of von Neumann algebras on a Hilbert space, see \cite{ArBook}.  We  assume that the Bisognano-Wichmann property holds and that $\A$ satisfies Haag duality
\[
\A(O) = \A(O')'\, , \quad O\in \O\, .
\]
Let $H$ be the associated net of standard subspaces. 
\[
H_\A(O) = \overline{\A(O)_{\rm sa}\Omega}\, , \quad O\in \O\, ,
\]
with $\A(O)_{\rm sa}$ the selfadjoint part of $\A(O)$ and $\Omega$ the vacuum vector. In general (possibly always), $H_\A$ is not a dual net, namely
\ben\label{sincl}
H_\A(O) \nsubseteq H_\A^d(O)\, ,  \quad O\in \O\,  ,
\een
namely the inclusion $H_\A(O) \subset H_\A^d(O)$ is strict if $O$ is a double cone (obviously, $H_\A(W) = H_\A^d(W)$ is $W$ is a wedge). 
We shall discuss the analogous statement for chiral conformal nets in Section \ref{sectf}. 

{We remark that Theorem \ref{UP} and Corollary \ref{UD} continue to hold when nets of standard subspaces with spacelike cone localization and infinite spin representations are taken into account. Indeed, in \cite{LMR16}, it is proved that if $\tilde U|_{\cP_+^\uparrow}$ contains infinite spin representations, then $H^d(O)$ can not be cyclic and $H^d(O)$ is trivial if all subrepresentations of $U$ have infinite spin. On the other hand, $H^d(C)$ is always cyclic if $C$ is a spacelike cone \cite{BGL02}. In this more general setting, Haag duality is the equality $H^d(C)=H^d(C')'$, for all spacelike cones $C$.}

\section{Universal bound}
Let $H$ be a local, Poincaré covariant net of real linear subspaces on the complex Hilbert space $\H$, with covariance unitary representation $U$.  Given $\phi\in \H$, we are interested in a bound for the \emph{entropy of $\phi$ with respect the region $C\subset \RR^{d+1}$ relative to $H$} defined by
\[
S_H(\phi|\!| C) \equiv S\big(\phi|\!| H(C)\big)\, .
\]
Given an anti-unitary, positive energy representation $V$ of $\pPoi$,
 we then define \emph{the entropy of $\phi$ with respect to $C$ associated with $V$} as
 \[
 S_{V}(\phi|\!| C) \equiv S_K(\phi|\!| C)\, ,
 \]
where $K$ is the local net of real linear spaces associated with $V$ by Theorem \ref{UP}. 
\bthm Let $H$ be a local, Poincaré covariant net real linear subspaces on the complex Hilbert space $\H$, with the Bisognano-Wichmann property. 
With $U$ the covariance unitary representation of $\Poi$, let $\tilde U$ be the canonical extension of $U$ to an anti-unitary representation of $\pPoi$. 

For every region $C\subset \RR^{d+1}$ and vector $\phi\in\H$, the bound
\ben\label{UB}
S_H(\phi|\!| C) \leq S_{\tilde U}(\phi|\!| C)
\een
holds and depends only on $\tilde U$, not on $H$. 
\ethm
\proof
Since $H(C)\subset H^d(C)$, we have 
\[
S_H(\phi|\!| C) \leq S_{H^d}(\phi|\!| C) \, ;
\]
by the monotonicity of the entropy. The inequality \eqref{UB} thus holds because $H^d$ is the net associated with $\tilde U$, so
\[
S_{H^d}(\phi|\!| C) = S_{\tilde U}(\phi|\!| C)\,  .
\]
\eproof
Let $V$ be a unitary, positive energy anti-representation of $\pPoi$  on the Hilbert space $\H$, with finite spin. {Then $V$ has a direct integral decomposition
\[
V = \int_X^\oplus V_\l d\mu_\l 
\]
on $\H = \int_X^\oplus \H_\l d\mu_\l$,
where  $V_\l$ is irreducible. Let $V_\l |_{\Poi}$ be the restriction of $V_\l$ to $\Poi$. Then either $V_\l |_{\Poi}$ is irreducible and self-conjugate, or $V_\l |_{\Poi} = U_\l\oplus \bar U_\l$, with $U_\l$ irreducible and not self-conjugate.

As is well known, $V_\l |_{\Poi}$ may have mass $m>0$ and arbitrary spin $s\in\frac12  \mathbb Z$, or $m = 0$ scalar (case of irreducible $V_\l |_{\Poi}$), or  $V_\l |_{\Poi}= U_\l\oplus \bar U_\l$, with $U_\l$ massless with non-zero helicity. Infinite spin representations do not appear here because the Reeh-Schlieder property for double cones holds \cite{LMR16}.

Every vector $\phi\in\H$ decomposes as $\phi =  \int_X^\oplus \phi_\l d\mu_\l$ and we have an integral expression for the entropy
\[
S_V(\phi|\!| C) = \int_X S_{V_\l}(\phi_\l |\!| C)d\mu_\l \, .
\]}

\section{Möbius covariant nets}
The previous discussion in the context of Poincaré covariant nets can be similarly done for nets on different spacetimes. We consider nets on the circle, that are covariant w.r.t. a unitary representation of the Möbius group $\Mob$, and we then explicitly compute the universal entropy bound within this framework. 

Let $\H$ be a complex Hilbert space, and $\I$ the family of proper intervals of $S^1$. A (local) \emph{Möbius covariant net} of $\H$ is a map
\[
I\in \I \mapsto H(I)\subset \H
\]
with $H(I)$ a real, closed linear subspace of $\H$, that satisfies the following
\begin{itemize}
\item $I_1 \subset  I_2 \implies H(I_1) \subset H(I_2)$ (\emph{isotony}); 
\item $I_1 \subset I'_2 \implies H(I_1) \subset H(I_2)'$ (\emph{locality});
\item There exists a unitary, positive energy representation $U$ of $\Mob$ on $\H$ such that $U(g)H(I) = H(gI)$ (\emph{Möbius covariance});
\item $\overline{\sum_{I\in\I} H(I)}= \H$ (\emph{non-degeneracy}). 
\end{itemize}
Here, $I' = S^1\setminus I$. In this case, the Reeh-Schlieder property, the uniqueness of $U$, and the Bisognano-Wichmann property 
\[
U\big(\delta_I(-2\pi s)\big)=\Delta_H^{is}
\]
are automatic, where $\delta_I$ is the one-parameter group of ``dilations'' associated with $I$. 

Haag duality holds:
\[
H(I') = H(I)'\, , \ I\in \I\, .
\]
The proof is analogous to the one in \cite{GL96}. 

If $U$ has no non-zero fixed vector, then $H(I)$ is factorial, $I\in\I$, as in Theorem \ref{pP}, thus
\ben\label{irr0}
H(\RR)  \supset \overline{H(-\infty, 0) + H(0, +\infty)} = \H
\een
because 
\[
 \big(H(-\infty, 0) + H(0, +\infty)\big)' = H(-\infty, 0)'\cap H(0, +\infty)' = H(0, +\infty)\cap H(0, +\infty)' = \{0\}\, .
\]
We may also define a (local) translation-dilation covariant net on the real line $\RR$
\[
I \in \I_0 \mapsto H(I)\subset \H\, ,
\]
with $\I_0$ the family of bounded intervals of $\RR$; we assume the Reeh-Schlieder property for $I\in\I_0$ and the Bisognano-Wichmann property for half-lines. 
All translation-dilation covariant nets on $\RR$ arise uniquely by restricting a Möbius covariant net on $S^1$ \cite{GLW98}. 

Given a translation-dilation covariant net $H$ on $\RR$, the dual net is defined by
\[
H^d(I) = H(\RR\setminus I)'\, , \ I\in \I_0\, .
\]
\bthm Let $H$ be a local, Möbius covariant net of closed real linear subspaces on the complex Hilbert space $\H$ on $S^1$. Let $H^d$ be the dual net, and $\tilde U$ the {covariance} unitary representation of {\rm $\Mob$} associated with $H^d$.  

For every interval $I\in\I_0$ of the real line, the bound
\ben\label{UBI}
S_H(\phi|\!| I) \leq S_{\tilde U}(\phi|\!| I)
\een
holds and depends only on $\tilde U$, not on $H$. 

$\tilde U$ is quasi-equivalent to the positive energy unitary representation of {\rm $\Mob$} with lowest weight $1$. 
\ethm
 
\section{Entropy, $U(1)$-current and its derivatives}
In this section, we illustrate the universal bound in the Möbius covariant case, by explicitly computing for in the one-particle net of standard subspaces associated with the $k$-derivative of the $U(1)$-current. 
\subsection{Nets associated with the $U(1)$-current and its derivatives}
 We sketch here the underlying structure, see also  \cite{GLW98}.
Denote by {$\frak L$} 
the real linear space of real, tempered distributions $f\in S(\RR)'$ such that the Fourier transform $\hat f$ is a measurable function;  with $k$ a positive integer, set
\[
||f ||^2_k \equiv 2\int_{\RR_+} p^{2k-1}|\hat f(p)|^2 dp < \infty\, .
\]
(here, the Fourier transform is normalized to be isometric). Then, $||\cdot||_k$ is a pseudo-nom on  $X^{(k)}\equiv  {\frak L}  + \RR^{2(k-1)}[x]$ that degenerates on $\RR^{2(k-1)}[x]$, where $\RR^n[x]$ are the real polynomials of degree less or equal to  $n$.

The quotient $\H^{(k)} \equiv X^{(k)}/\RR^{2(k-1)}[x]$ is a Hilbert space with a real scalar product
\[
\langle f,g\rangle_k= 2\int_{\RR_+} p^{2k-1}\hat f(-p)\hat g(p)dp \, .
\]
The class of $f\in X^{(k)}$ in $\H^{(k)}$ is denoted by $[f]_k$. For notational simplicity, we may write $f$ instead of $[f]_k$ if no confusion arises. 
Clearly, the Schwartz space $S(\RR)$ faithfully embeds into $\H^{(k)}$ with dense range. {$\H^{(k)}$ is equipped with a complex structure $\iota_k: X^{(k)}\to X^{(k)}$, i.e. $\iota_k$ is isometric and $\iota_k^2 = -1$ ($\widehat{\iota_k f}(p) = i\,{\rm sign}(p)\hat f(p)$, $f\in S(\RR)$, independently of $k$),}  and
the symplectic form $\beta_k$ gives the imaginary part of the scalar product on $X^{(k)}$
\ben\label{symp}
\beta_k(f,g)=\int_\RR f^{(k-1)}(x)g^{(k)}(x)dx=-\int_\RR f^{(k)}(x)g^{(k-1)}(x)dx\, , \ f,g \in X^{(k)}\, .
\een 
{($h^{(k)}$ denotes the $k$-derivative of the function $h$.)}

The complex scalar product, and the norm, on $X^{(k)}/\RR^{2(k-1)}[x]$ are thus given by
\[
( f,g)_k=\langle f,g\rangle_k+i \beta_k(f,g),\qquad (f,f)_k =\|f\|_k^2\, , \ f \in X^{(k)}\, .
\]
$\H^{(k)}$ carries the irreducible, positive energy, unitary representation $U^{(k)}$ of $\mathrm{PSL}(2,\RR)$ with lowest weight $k$ given by
\[
(U^{(k)}(g)f)(x)=(cx-a)^{2(k-1)}f(g^{-1}x)\, , \ f\in X^{(k)}\, ,
\]
$g=\left(\begin{array}{cc} a&b\\c&d
\end{array}\right)\in\mathrm{SL}(2,\RR)$, $gx=\frac{ax+b}{cx+d}$. 

The Möbius covariant net $H^{(k)}$ of standard subspaces of $\H^{(k)}$ associated with $U^{(k)}$ is given by
\[
\hki=\big\{[f]_k \in \H^{(k)}:   f \in X^{(k)}, \,\supp\,f\subset I\big\}\, ,\ I\in\I_0\, ,
\]
Now, the $(k-1)$-derivative gives a well defined one-to-one linear map
{
\ben\label{idif}
D^{k-1}: \H^{(k)}\to \H^{(1)}\, ,\quad
D^{k-1}[f]_k = [f^{(k-1)}]_1\, ,\ f\in X^{(k)}\,   .
\een}
$D^{k-1}$ is isometric, thus gives a unitary, that identifies the Hilbert spaces $\H^{(k)}$ and $\H^{(1)}$,  
for $k\geq2$. ${D^{k-1}}$  carries $H^{(k)}(I)$  onto a subspace $\hik\equiv D^{k-1}H^{(k)}(I)$ of $H^{(1)}(I)$, $I\in \I_0$.  $H_{(k)}$ is the subnet of $H^{(1)}$ on $\H^{(1)}$ given by
\[
\hik=\Big\{g \in S(\RR) : \int_\RR x^{n} g(x)dx=0, n = 0,1,\ldots,k-2,\,\supp g\subset I \Big\}^- \subset\hoi  
\]
(closure on $\H^{(1)}$; ${H_{(1)}(I)} = H^{(1)}(I)$). 
We denote by $B$ the unit interval $B=(-1,1)$. The M\"obius one-parameter group of dilations associated with $B$ is
\[
\delta_{B}(s)x=\frac{\cosh(s/2)x +\sinh(s/2)}{\sinh(s/2)x+\cosh(s/2)}\,  ,
\]
The Bisognano-Wichmann property holds for the net $H^{(k)}$:
\[
U^{(k)}\big(\delta_B(-2\pi s)\big)=\Delta_{H^{(k)}(B)}^{is}\, ,
\]
where
\ben\label{Uk}
\Big(U^{(k)}\big(\delta_B(-2\pi s)\big)f\Big)(x) = \big(\sinh(\pi s)x+\cosh(\pi s)\big)^{2(k-1)}f\big(\delta_B(2\pi s)x\big)\, .
\een
\bprop
The dual net $H_{(k)}^d$ of $H_{(k)}$ on $\H^{(1)}$ is $H^{(1)}$. 
\eprop
\proof
$H^{(1)}$ is strongly additive; namely Haag duality holds on the real line \cite{BSM90}. $H_{(k)}$ is a subnet of $H^{(1)}$ sharing the same translation-dilation representation unitary representation and the Bisognano-Wichmann property holds, see \cite{L}. Thus, $H^{(1)}$ and $H_{(k)}$ are equal on half-lines.  
Therefore, 
\[
H^{(1)}(I)'=H^{(1)}(\RR\setminus I)' =\big(H^{(1)}(I_1) + H^{(1)}(I_2)\big)' =  \big(H_{(k)}(I_1) + H_{(k)}(I_2)\big)' = H^d_{(k)}(I)\, ,
\]
if $I\in\I_0$,
where $I_1, I_2$ are the two half-lines in $\RR\setminus I$. The proposition thus holds. 
\eproof
As immediate consequence, the universal bound for the net $H_{(k)}$ takes the form
\[
S\big(f|\!| H_{(k)}(I)\big)\leq S\big(f|\!| H^{(1)}(I)\big) = S_{U^{(1)}}(\phi|\!| I)\, ,
\]
for every set $I\subset \RR$. 
We now compute these quantities. 

\subsection{Local entropy in the $U(1)$-current model}
Formula \eqref{Uk} in case $k=1$ specifies to $\Big(U^{(1)}\big(\delta_B(-2\pi s)\big)f\Big)(x)=f\big(\delta_B(2\pi s)x\big)$. 
Denote by $C^\infty_0(\RR)$ the space of real, smooth functions on $\RR$ with compact support. 
\bprop\label{MH1}
The modular Hamiltonian associated with $H^{(1)}(B)$ on $\H^{(1)}$ is given by
\[
{\iota_1\log\Delta_{H^{(1)}(B)} f = {\pi (1-x^2)f'}\, ,\ f\in C^\infty_0(\RR)\, };
\]
$C^\infty_0(\RR)$ is a core for 
$\log\Delta_{H^{(1)}(B)}$. 
\eprop
\proof
With $f\in C^\infty_0(\RR)$, we have (as in Prop. \ref{propk})
\begin{align*}
{\iota_1(\log\Delta_{H^{(1)}(B)} f)(x)}&=\frac d{{ds}}U^{(1)}\big(\delta_B(-2\pi s)f\big)\big|_{s=0}(x) \\
& =\frac d{{ds}} f\big(\delta_B(2\pi s)x\big)  \big|_{s=0}  ={ \pi (1-x^2)f'(x)} \, .
\end{align*}
$C^\infty_0(\RR)$ is a dense, $U^{(1)}$-globally invariant subspace in the domain of the generator $\log\Delta_{H^{(1)}(B)}$, thus a core for 
$\log\Delta_{H^{(1)}(B)}$. 
\eproof
{The above formula for the modular Hamiltonian and the following corollary are the one-dimensional versions of the ones in 
\cite{LM23}. See \cite{GP} for a specific derivation.}
\begin{corollary}\label{SH1}
Let $f\in S(\RR)$, so $[f]_1\in \H^{(1)}$. We have
\[
S\big(f|\!| H^{(1)}(B)\big)={\pi}\int_B(1-x^2)f'(x)^2dx\, .
\]
\end{corollary}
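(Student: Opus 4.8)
The plan is to substitute the modular Hamiltonian of Proposition \ref{MH1} into the defining formula \eqref{Sf} and reduce the whole computation to a single symplectic pairing. Since the complex structure of $\H^{(1)}$ is $\iota_1$, the operator ``$i\log\Delta_H$'' appearing in \eqref{Sf} is precisely $\iota_1\log\Delta_{H^{(1)}(B)}$, and Proposition \ref{MH1} identifies it, on the core $C^\infty_0(\RR)$, with the map $f\mapsto \pi(1-x^2)f'$. Setting $g:=\pi(1-x^2)f'$ --- again a Schwartz function, but now one that vanishes at $x=\pm1$ --- the definition \eqref{Sf} becomes
\[
S\big(f|\!| H^{(1)}(B)\big)=-\Im\big(f,\,P_{H^{(1)}(B)}\,g\big)_1=-\beta_1\big(f,\,P_{H^{(1)}(B)}\,g\big),
\]
so that everything hinges on evaluating the cutting projection $P_{H^{(1)}(B)}$ on $g$.

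First I would identify this cutting projection with multiplication by the indicator $\chi_B$ of $B=(-1,1)$. By Haag duality one has $H^{(1)}(B)'=H^{(1)}(B')$, and the net is defined by the support prescription $H^{(1)}(I)=\{[h]_1:\supp h\subset I\}^-$; hence the splitting $g=\chi_B g+\chi_{B'}g$ exhibits $g$ as the sum of an element of $H^{(1)}(B)$ and an element of its symplectic complement. The weight $1-x^2$ is decisive here: it forces $g(\pm1)=0$, so both $\chi_B g$ and $\chi_{B'}g$ are continuous, compactly supported functions of finite $\H^{(1)}$-norm and no endpoint singularity occurs. By the defining property of the cutting projection, $P_{H^{(1)}(B)}\,g=\chi_B g$.

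It then remains to compute $-\beta_1(f,\chi_B g)$. Using the expression \eqref{symp} for $\beta_1$, namely $\beta_1(u,v)=\int_\RR u\,v'\,dx=-\int_\RR u'\,v\,dx$, and the fact that $\chi_B g$ carries no jump at $\pm1$, I would integrate by parts to get
\[
-\beta_1(f,\chi_B g)=\int_{-1}^1 f'(x)\,g(x)\,dx=\pi\int_{-1}^1(1-x^2)f'(x)^2\,dx,
\]
which is the asserted identity; note that the same vanishing $g(\pm1)=0$ is what annihilates the boundary term in this integration by parts.

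The main obstacle is the rigorous identification of the cutting projection with multiplication by $\chi_B$, together with the surrounding domain and regularity bookkeeping: one must check that $f$ lies in the form domain so that \eqref{Sf} applies in the quadratic-form sense (arguing first on the core $C^\infty_0(\RR)$ of Proposition \ref{MH1} and extending to $S(\RR)$ by the density and the lower-semicontinuity and monotone-continuity properties of $S(\cdot|\!|\cdot)$), and that $\chi_B g\in H^{(1)}(B)$ and $\chi_{B'}g\in H^{(1)}(B)'$ genuinely hold in $\H^{(1)}$. Each of these points rests on the endpoint vanishing $g(\pm1)=0$ produced by the factor $1-x^2$, which is also the structural reason the final expression is finite.
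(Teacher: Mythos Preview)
Your argument is essentially the paper's own proof: identify $\iota_1\log\Delta_{H^{(1)}(B)}f=\pi(1-x^2)f'$ via Proposition~\ref{MH1}, recognize the cutting projection as multiplication by $\chi_B$ (the $k=1$ case of Lemma~\ref{Pchi}, using the endpoint vanishing forced by the factor $1-x^2$), and evaluate the symplectic form $\beta_1$.

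Two small points. First, for $f\in S(\RR)$ the function $g=\pi(1-x^2)f'$ is \emph{not} Schwartz (the polynomial factor grows), so your opening sentence needs adjustment; what matters is only that $\chi_B g$ and $\chi_{B'}g$ lie in $\H^{(1)}$, which they do since $g(\pm1)=0$. Second, your proposed extension from $C^\infty_0(\RR)$ to $S(\RR)$ via lower semicontinuity yields only the inequality $S(f|\!|H^{(1)}(B))\leq\pi\int_B(1-x^2)f'^2\,dx$, and monotone continuity concerns increasing subspaces rather than vectors, so it does not help here. The paper instead invokes the \emph{locality} property at the outset: given $f\in S(\RR)$, choose $f_1\in C^\infty_0(\RR)$ agreeing with $f$ on a neighbourhood of $\bar B$; then $f-f_1$ is supported in $B'$, hence $[f-f_1]_1\in H^{(1)}(B)'$ and $S(f|\!|H^{(1)}(B))=S(f_1|\!|H^{(1)}(B))$, while $f_1'=f'$ on $B$. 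This gives equality directly and is the cleaner route.
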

\proof
By locality, we may assume that $f\in C_0^\infty(\RR)$. 
With $\chi_B$ the characteristic function of $B$,
the function ${h= \pi\chi_B  (1-x^2)f'}$, and its derivative, belong to $L^2(\RR)$, so we have a well defined element $[h]_1\in \H^{(1)}$, indeed 
$[h]_1\in H^{(1)}(B)$. As in Lemma \ref{Pchi}, 
\[
P_{H^{(1)}(B)}[h]_1 = [\chi_B h]_1\, ,
\]
with $P_{H^{(1)}(B)}$ the cutting projection of $\H^{(1)}$ onto $H^{(1)}(B)$; so, by Proposition \ref{MH1}, we have
\begin{align*}
S\big(f|\!| H^{(1)}(B)\big) &= {-\beta_1\big\langle f, P_{H^{(1)}(B)}\iota_1\log\Delta_{H^{(1)}(B)}f\big\rangle }\\
&{=  \int_\RR f'(x)\chi_B(x) \big(\iota_1\log\Delta_{H^{(1)}(B)}f\big)(x)dx }\\
&= \pi \int_\RR f'(x)\chi_B(x) (1-x^2)f'(x)dx \\
&=\pi\int_B (1-x^2)f'(x)^2dx\, .
\end{align*}
\eproof
\subsection{Local entropy for derivatives of the $U(1)$-current}
Extending the previous discussion, we now consider the nets associated with the $U(1)$-current derivatives.  
\bprop\label{propk}
Let $f\in C^\infty_0(\RR)$. Then $f$ belongs to the domain of the modular Hamiltonian $\log\Delta_{H^{(k)}(B)}$ on $\H^{(k)}$ associated with $H^{(k)}(B)$ and
\ben\label{LDk}
\big(\iota_k\log\Delta_{H^{(k)}(B)}f\big)(x)
=2\pi\big(k-1)xf(x)+ \pi(1-x^2)f'(x) \, ,
\een
The space $C^\infty_0(\RR)$ is core for $\log\Delta_{H^{(k)}(B)}$. 

More generally, \eqref{LDk} holds for every real, smooth function $f$ on $\RR$ such that $f, f'\in X^{(k)}$.  
\eprop
\proof
Set $f_s (x)$ equal to the right-hand side of \eqref{Uk}. Since $f_s\in C^\infty_0(\RR)$, the incremental ratio $(f_s - f)/s$ tends uniformly to $\frac{d}{ds} f_s|_{s=0}$ as $s\to 0$ (by Lagrange theorem), and a similar statement holds for higher order $x$-derivatives of $f$. Therefore,
\begin{align*}
\b_k\big(\lim_{s\to 0} (f_s- f)/s , g\big) &= {{(-1)^{k}}} \int_\RR g^{(2k-1)}(x) \Big(\lim_{s\to 0} \big(f_s(x)- f(x)\big)/s\Big) dx \\
 &= {(-1)^{k}} \int_\RR g^{(2k-1)}(x)\frac{d}{ds} f_s(x)\Big |_{s=0} dx \, .
\end{align*}
Note that the closure $H^{(k)}(\RR)$ of $C^\infty_0(\RR)$ is equal to $\H^{(k)}$. 
As $(f_s - f)/s$ is uniformly bounded in the $||\cdot||_k$-norm as 
$s\to 0$, we conclude that $(f_s - f)/s\to \frac{d}{ds} f_s\big |_{s=0}$ 
weakly in $\H^{(k)}$, $s\to 0$. Indeed,  $(f_s - f)/s\to \frac{d}{ds} f_s\big |_{s=0}$ in the $||\cdot||_k$-norm, because $U^{(k)}\big(\delta_B(-2\pi \cdot)\big)$ is a one-parameter unitary group. So, $\frac{d}{ds} f_s\big |_{s=0}$ belongs to the domain of the generator of $U^{(k)}\big(\delta_B(-2\pi \cdot)\big)$. 

We then have:
\begin{align*}
\big(\iota_k\log\Delta_{H^{(k)}(B)}f\big)(x)& = \Big(\frac d{ds}U^{(k)} \big(\delta_B(-2\pi s)f\big)\Big|_{s=0}\Big)(x) \\
&= \lim_{s\to 0} \big(f_s(x) - f(x)\big)/s\\
&= 2\pi(k-1) xf(x)+\pi(1-x^2)f'(x)\, .
\end{align*}
The core property follows as in Prop. \ref{MH1}. 
\eproof
Let $g\in S(\RR)$ with $g^{(n)}(\pm 1) = 0$, $n= 0, 1,\dots, k-1$, then $h \equiv\chi_B g$ is of class $C^{(k-1)}(\RR)$, the $k$-derivative $h^{(k)}(x)$ exists if $x\neq \pm1$, and $h^{(k)}\in L^2(\RR)$. It follows that $h$ gives a vector $[h]_k$ in $\H^{(k)}$, similarly as in the previous section. 
\blem\label{Pchi}
Let $P_{H^{(k)}(B)}$ be the cutting projection of $\H^{(k)}$ onto $H^{(k)}(B)$ and $f\in X^{(k)}$ a real, smooth function. Then
\[
P_{H^{(k)}(B)} [f]_k = [\chi_B g]_k\, , 
\]
if there exists
$g\in X^{(k)}$, $[g]_k = [f]_k$, and $g^{(n)}(\pm 1) = 0$, $n= 0, 1,\dots, k-1$ (thus $[\chi _Bg]_k\in H^{(k)}(B)$).  
\elem
\proof
Clearly, 
\ben\label{PHd}
[f]_k = [g]_k = [\chi_B g]_k + \big([g]_k - [\chi_B g]_k\big) \, ;
\een
now, $[\chi _Bg]\in H^{(k)}(B)$,
and $\big([g]_k - [\chi_B g]_k\big)\in H^{(k)}(B')$; therefore \eqref{PHd} follows from the definition of the cutting projection. 
\eproof
We note now the identity
\ben\label{IdD}
\big(2(k-1)xf(x) +(1-x^2)f'(x)\big)^{(k-1)}
= k(k-1) f^{(k-2)}(x)+ (1-x^2)f^{(k)}(x)\, ,
\een
$f\in S(\RR)$,
that can be checked by induction. 
\begin{theorem}\label{thm:ent}
With {$k\geq 1$ an integer and} $f\in S(\RR)$, thus $[f]_k\in \H^{(k)}$,
we have 
\ben\label{SkB}
S\big([f]_k |\! | H^{(k)}(B)\big) = \pi\int_{B}(1-x^2)g^{(k)}(x)^2dx - \pi  k(k-1) \int_{B}g^{(k-1)}(x)^2dx  \, ,
\een
where $g\in X^{(k)}$, $[g]_k = [f]_k$, and $g^{(n)}(\pm 1) = 0$, $n = 0, 1, \dots, k- {2}$. 
\end{theorem}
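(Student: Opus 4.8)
The plan is to follow the $k=1$ computation of Corollary \ref{SH1} verbatim, replacing Proposition \ref{MH1} by its $k$-th analogue, Proposition \ref{propk}, and paying attention to the polynomial ambiguity in the quotient $\H^{(k)}=X^{(k)}/\RR^{2(k-1)}[x]$. First I would fix the representative that appears in the statement. Since $f\in S(\RR)\subset X^{(k)}$ is smooth, Hermite interpolation supplies a polynomial $p$ of degree $\le 2k-3$ matching the jets $f^{(n)}(\pm1)$ for $n=0,\dots,k-2$ at the two endpoints; these are $2(k-1)$ conditions, uniquely solvable within $\RR^{2(k-1)}[x]$. Then $g\equiv f-p$ is smooth, satisfies $[g]_k=[f]_k$ and $g^{(n)}(\pm1)=0$ for $n=0,\dots,k-2$, and has $g,g'\in X^{(k)}$, so the extended form of Proposition \ref{propk} applies to it and $g$ lies in the form domain of $\log\Delta_{H^{(k)}(B)}$. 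This is exactly the $g$ of \eqref{SkB}, so it suffices to establish the formula for it.

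Next I would set $\psi\equiv\iota_k\log\Delta_{H^{(k)}(B)}g=\pi\big(2(k-1)xg+(1-x^2)g'\big)$ by Proposition \ref{propk}, and identify the cutting projection of $[\psi]_k$. The step I expect to be the main obstacle is showing that the normalization $g^{(n)}(\pm1)=0$ for $n\le k-2$ forces $\psi$ to vanish to order $k$ — not merely $k-1$ — at the endpoints, which is precisely what makes Lemma \ref{Pchi} applicable. Near $x=1$ one has $g(x)=a(x-1)^{k-1}+O\big((x-1)^{k}\big)$ with $a=g^{(k-1)}(1)/(k-1)!$, so the two leading contributions $2(k-1)xg$ and $(1-x^2)g'$ each produce the monomial $(x-1)^{k-1}$, with coefficients $2(k-1)a$ and $-2(k-1)a$; these cancel, and the same cancellation occurs at $x=-1$ (for $k=1$ the required vanishing comes directly from the factor $1-x^2$, and the statement reduces to Corollary \ref{SH1}). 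Hence $\psi^{(n)}(\pm1)=0$ for $n=0,\dots,k-1$, so $\psi$ is a legitimate representative in the sense of Lemma \ref{Pchi} and $P_{H^{(k)}(B)}[\psi]_k=[\chi_B\psi]_k$.

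With this in hand the entropy is computed exactly as in Corollary \ref{SH1}. Using the definition together with the previous step,
\[
S\big([g]_k|\!|H^{(k)}(B)\big)=-\beta_k\big(g,\,P_{H^{(k)}(B)}\iota_k\log\Delta_{H^{(k)}(B)}g\big)=-\beta_k\big(g,\chi_B\psi\big).
\]
Applying the form $\beta_k(a,b)=-\int_\RR a^{(k)}b^{(k-1)}dx$ of \eqref{symp}, and noting that the order-$(k-1)$ vanishing of $\psi$ guarantees $(\chi_B\psi)^{(k-1)}=\chi_B\psi^{(k-1)}$ with no boundary distributions, this equals $\int_B g^{(k)}\psi^{(k-1)}dx$. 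Substituting identity \eqref{IdD}, i.e. $\psi^{(k-1)}=\pi\big(k(k-1)g^{(k-2)}+(1-x^2)g^{(k)}\big)$, gives
\[
S=\pi k(k-1)\int_B g^{(k)}g^{(k-2)}dx+\pi\int_B(1-x^2)g^{(k)}(x)^2dx.
\]
Finally, integrating the first term by parts and discarding the boundary term by $g^{(k-2)}(\pm1)=0$ yields $\int_B g^{(k)}g^{(k-2)}dx=-\int_B g^{(k-1)}(x)^2dx$, which produces exactly \eqref{SkB}. The only analytic points needing care beyond the endpoint cancellation are that $g$ belongs to the form domain (the core statement of Proposition \ref{propk}) and that, since $\chi_B\psi$ is supported in $B$, all integrals reduce to the compact interval $B$ so that no growth of $g$ at infinity intervenes.
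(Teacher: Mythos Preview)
Your proof is correct and follows essentially the same route as the paper: invoke Proposition \ref{propk} for the modular Hamiltonian, apply Lemma \ref{Pchi} to identify the cutting projection as multiplication by $\chi_B$, use identity \eqref{IdD} to differentiate, and integrate by parts using $g^{(k-2)}(\pm1)=0$. Your write-up actually fills in a detail the paper leaves implicit, namely the endpoint cancellation showing $\psi^{(n)}(\pm1)=0$ for $n\le k-1$ so that Lemma \ref{Pchi} is truly applicable and $(\chi_B\psi)^{(k-1)}=\chi_B\psi^{(k-1)}$; the Hermite-interpolation construction of $g$ is extra (the theorem takes such a $g$ as given), but harmless.
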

\begin{proof}
By Lemma \ref{Pchi}, and using the identity \eqref{IdD}, we have
\begin{align*}
S\big([f]_k |\! | H^{(k)}(B)\big)&={-\beta_k}\big\langle g, P_{H^{(k)}(B)} \iota_k\log\Delta_{H^{(k)}(B)}g\big\rangle_k\\
&={\int_\RR g^{(k)}\big(\chi_B \iota_k\log\Delta_{H^{(k)}(B)}g\big)^{(k-1)}dx}\\
&=\pi\int_B g^{(k)}(x)\Big(k(k-1)g^{(k-2)}(x)+ (1-x^2)g^{(k)}(x)\Big)dx\\
 &=-\pi \int_B k(k-1)g^{(k-1)}(x)^2 dx+ \pi \int_B(1-x^2)g^{(k)}(x)^2dx \, .
 \end{align*} 
\end{proof}
Recall the unitary identification given by \eqref{idif} and take $f\in S(\RR)$, thus $[f]_1\in\H^{(1)}$ and $(D^{k-1})^*[f]_1 \in \H^{(k)}$. 
We have $(D^{k-1})^*[f]_1 = [g]_k$ with $g\in X^{(k)}$ a real, smooth function such that $g^{(k-1)} = f$.
Since $D^{k-1}$ is a unitary, then 
\[
S\big([f]_1 |\!| H_{(k)}(B)\big) = S\big((D^{k-1})^* [f]_1 |\!| (D^{k-1})^* H^{(k)}(B)\big) = S\big([g]_k |\!| H^{(k)}(B)\big)
\]
and the following corollary holds. 
\begin{corollary}\label{cor:ent}
Let $f\in S(\RR)$ with $\int_B x^n f(x)dx = 0$, $n = 0, 1, \dots, k-2$. 
The entropy of $[f]_1$ w.r.t. $H_{(k)}(B)$ on $\H^{(1)}$ is given by
\[
S\big([f]_1|\!|H_{(k)}(B)\big)= \pi\int_B(1-x^2)f'(x)^2dx - \pi k(k-1)\int_B f(x)^2dx\, ;
\]
hence, the expression on the right-hand side is non-negative, and
\[
S\big([f]_1|\!|H_{(k)}(B)\big) \leq S\big( [f]_1 |\!| {\hob}\big)\, , \ k = 1, 2,\dots
\]
\end{corollary}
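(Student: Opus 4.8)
The plan is to deduce the identity from Theorem \ref{thm:ent} applied in $\H^{(k)}$, the whole point being to choose the primitive $g$ correctly so that the two hypotheses of that theorem---membership in the class $(D^{k-1})^*[f]_1$ and the vanishing $g^{(n)}(\pm1)=0$ for $n=0,\dots,k-2$---hold simultaneously, while retaining $g^{(k-1)}=f$ exactly so that the resulting formula is expressed through $f$ and $f'$. The reduction $S([f]_1|\!|H_{(k)}(B))=S([g]_k|\!|H^{(k)}(B))$, by unitary invariance of the entropy under $D^{k-1}$, has already been recorded before the statement; I may assume $k\ge2$, the case $k=1$ being Corollary \ref{SH1}.

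The key step is to fix $g$ as the $(k-1)$-fold primitive of $f$ anchored at the left endpoint, i.e. Cauchy's repeated-integration formula
\[
g(x)=\frac{1}{(k-2)!}\int_{-1}^x(x-t)^{k-2}f(t)\,dt .
\]
This $g$ is smooth, of at most polynomial growth, and satisfies $g^{(k-1)}=f$, hence $\|g\|_k=\|f\|_1<\infty$ and $[g]_k=(D^{k-1})^*[f]_1$, with $g^{(k)}=f'$. Differentiating under the integral sign gives, for $0\le n\le k-2$,
\[
g^{(n)}(x)=\frac{1}{(k-2-n)!}\int_{-1}^x(x-t)^{k-2-n}f(t)\,dt ,
\]
so that $g^{(n)}(-1)=0$ automatically. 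This is exactly where the moment hypothesis is used: evaluating at $x=1$ and expanding $(1-t)^{k-2-n}$ in powers of $t$ of degree at most $k-2$ exhibits $g^{(n)}(1)$ as a linear combination of the moments $\int_B t^m f(t)\,dt$, $m=0,\dots,k-2$, all of which vanish by assumption. Hence $g^{(n)}(\pm1)=0$ for $n=0,\dots,k-2$, so $g$ meets the hypotheses of Theorem \ref{thm:ent}.

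Substituting $g^{(k-1)}=f$ and $g^{(k)}=f'$ into the conclusion of Theorem \ref{thm:ent} then gives
\[
S([f]_1|\!|H_{(k)}(B))=\pi\int_B(1-x^2)f'(x)^2\,dx-\pi k(k-1)\int_B f(x)^2\,dx ,
\]
which is the asserted identity. Non-negativity of the right-hand side is not argued by a direct estimate---indeed it is not visible from the expression, which subtracts a positive quantity---but is read off from the general positivity $S(\phi|\!|H)\ge0$ recalled in Section \ref{S2}; this is the conceptual payoff worth stressing. Finally, $S([f]_1|\!|H_{(k)}(B))\le S([f]_1|\!|H^{(1)}(B))$ follows from monotonicity of the entropy together with $H_{(k)}(B)\subset H^{(1)}(B)$, and equivalently by comparison with Corollary \ref{SH1}, since $S([f]_1|\!|H^{(1)}(B))=\pi\int_B(1-x^2)f'(x)^2\,dx$ and the two expressions differ precisely by the non-negative term $\pi k(k-1)\int_B f^2$.

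The main obstacle is the middle paragraph: one must produce a single primitive simultaneously normalized at both endpoints while keeping $g^{(k-1)}=f$. Anchoring the iterated integral at $-1$ disposes of the $k-1$ conditions at the left endpoint for free, and the $k-1$ prescribed moments of $f$ are precisely enough---and exactly the right number---to kill the remaining $k-1$ boundary values at the right endpoint.
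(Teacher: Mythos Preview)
Your proof is correct and follows essentially the same route as the paper's: both choose a smooth primitive $g$ with $g^{(k-1)}=f$ normalized so that $g^{(n)}(-1)=0$ for $n\le k-2$, invoke the moment hypothesis to force $g^{(n)}(1)=0$, and then apply Theorem~\ref{thm:ent}. You are more explicit---writing out Cauchy's iterated-integral formula and spelling out the non-negativity and monotonicity conclusions---but the underlying argument is the same.
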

\proof
Let $g\in X^{(k)}$ be a smooth function with $g^{(k-1)} = f$ and $g^{(n)}(-1) = 0$, $n = 0,1,\dots, k-2$. By the assumption on the vanishing of the moments $\int_B x^n f(x)dx$, we also have $g^{(n)}(1) = 0$, $n = 0,1,\dots, k-2$. 
Then, by Thm. \ref{thm:ent},
\begin{align*}
S\big([f]_1 |\!|H_{(k)}(B)\big) &= S\big([g]_k |\!| H^{(k)}(B)\big)  \\
&= \pi\int_{B}(1-x^2)g^{(k)}(x)^2dx - \pi k(k-1) \int_{B}g^{(k-1)}(x)^2dx \\
&= \pi\int_{B}(1-x^2)f'(x)^2dx - \pi  k(k-1) \int_{B}f(x)^2dx \, .
\end{align*}
\eproof
The monotonicity
\[
S\big([f]_1 |\!|H_{(k)}(B)\big) \leq S\big([f]_1|\!|H_{(k-1)}(B)\big)
\]
is clear. 
It is instructive to give a direct proof that 
$S\big([f]_k |\! | H^{(k)}(B)\big)$ is non-negative, namely that the right-hand side of \eqref{SkB} is non-negative. 
\begin{proposition}\label{positive}
Let $f\in C^\infty(\bar B)$, then
\[
\int_{B}(1-x^2)f^{(k)}(x)^2dx \geq  k(k-1) \int_{B}f^{(k-1)}(x)^2dx\, .
\]
\end{proposition}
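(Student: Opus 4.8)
The plan is to recognize the quadratic form on the left as that of the Legendre differential operator and then conclude from its spectral decomposition. Writing $\phi = f^{(k-1)}$, the asserted inequality reads
\[
\int_{-1}^1 (1-x^2)\,\phi'(x)^2\,dx \;\ge\; k(k-1)\int_{-1}^1 \phi(x)^2\,dx .
\]
First I would integrate by parts on $B=(-1,1)$ to rewrite the left-hand side as $\langle L\phi,\phi\rangle_{L^2(B)}$, where $L\phi = -\big((1-x^2)\phi'\big)'$ is the Legendre operator; the boundary contribution vanishes because the weight $1-x^2$ is zero at $\pm1$ while $\phi\in C^\infty(\bar B)$. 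Thus the claim is exactly the lower bound $\langle L\phi,\phi\rangle \ge k(k-1)\,\langle\phi,\phi\rangle$.

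The decisive structural fact is that $\phi=f^{(k-1)}$ is orthogonal in $L^2(B)$ to the polynomials of degree at most $k-2$. Here $f$ is the function $g$ appearing in \eqref{SkB}, so it carries the boundary conditions $f^{(n)}(\pm1)=0$ for $n=0,1,\dots,k-2$; these are precisely what is needed, since without them the inequality is false (e.g.\ $k=2$, $f(x)=x$ gives $0 \ge 4$). I would verify the orthogonality by showing $\int_{-1}^1 x^m\phi\,dx = 0$ for $m=0,\dots,k-2$: integrating by parts $m$ times converts $\int x^m f^{(k-1)}\,dx$ into $\pm m!\int_{-1}^1 f^{(k-1-m)}\,dx = \pm m!\,[f^{(k-2-m)}]_{-1}^1$, and every intermediate boundary term involves some $f^{(i)}(\pm1)$ with $0\le i\le k-2$, all of which vanish by hypothesis.

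To finish I would invoke that the Legendre polynomials $\{P_n\}_{n\ge 0}$ form an orthogonal basis of $L^2(B)$ with $LP_n = n(n+1)P_n$. Since $\{1,x,\dots,x^{k-2}\}$ and $\{P_0,\dots,P_{k-2}\}$ span the same subspace, the moment conditions above mean $\phi$ lies in the closed span of $\{P_n : n\ge k-1\}$. Expanding $\phi=\sum_{n\ge k-1}c_nP_n$ and using self-adjointness of $L$ together with Parseval gives $\langle L\phi,\phi\rangle = \sum_{n\ge k-1} n(n+1)\,c_n^2\|P_n\|^2$ and $\langle\phi,\phi\rangle = \sum_{n\ge k-1} c_n^2\|P_n\|^2$; since $n(n+1)\ge (k-1)k = k(k-1)$ for all $n\ge k-1$, the inequality follows term by term.

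I expect the only real obstacle to be the bookkeeping in the orthogonality step: one must confirm that the $k-1$ boundary conditions on $f$ remove exactly the modes $P_0,\dots,P_{k-2}$, so that the smallest surviving eigenvalue is $(k-1)k$ and matches the constant $k(k-1)$ on the nose---one condition fewer or more would shift this threshold and break the bound. The spectral computation itself is routine once one notes that $\phi=f^{(k-1)}$ is smooth on $\bar B$, which guarantees finiteness of all integrals and good convergence of the Legendre expansions of $\phi$ and of $L\phi$.
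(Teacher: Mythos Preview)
Your approach---recasting the left side as the quadratic form of the Legendre operator $L=-\frac{d}{dx}(1-x^2)\frac{d}{dx}$ and using its spectral decomposition in Legendre polynomials to conclude that $\phi=f^{(k-1)}$ lies in the span $E_{k-1}$ of $\{P_n:n\ge k-1\}$---is exactly the paper's. You correctly observe, however, that the statement as written (for arbitrary $f\in C^\infty(\bar B)$) is false, as your example $k=2$, $f(x)=x$ shows; the boundary conditions $f^{(n)}(\pm1)=0$ for $n=0,\dots,k-2$, inherited from the function $g$ of Theorem~\ref{thm:ent}, are what is actually needed, and your integration-by-parts computation of the moments $\int_B x^m f^{(k-1)}\,dx$ supplies precisely the justification the paper omits when it asserts $f^{(k-1)}\in E_{k-1}$.
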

\begin{proof} 
Let us consider the Legendre operator $L=\frac{d}{dx}(1-x^2)\frac{d}{dx}$  on $L^2(B)$ with domain $C^\infty(\bar B)$.  $L$ is essentially selfadjoint on $C^\infty(\bar B)$.  The Legendre polynomials $P_n(x) \equiv \frac1{2^n n!}\frac{d^n}{dx^n}(x^2-1)^n$ form a complete orthogonal family in $L^2(B)$ that diagonalizes $L$
\[
- L P_n = n(n+1) P_n\, ,\ n = 0, 1\dots\, ,
\]
so $n(n+1)$ is a multiplicity one eigenvalue for $-L$ and the spectrum of the closure of $-\bar L$ is $\{n(n+1):n= 0, 1, \dots\}$.

The quadratic form associated with $-L$ is
\[
-(f, Lf) = \int_B(1-x^2)f'(x)^2dx\, ,\ f \in C^\infty(\bar B)\, ,
\]
so we have to show that
\ben\label{ineq}
-( f^{(k-1)} , L f^{(k-1)} ) \geq k(k-1) || f^{(k-1)}||^2\, , \   f \in C^\infty(\bar B)\, .
\een
(with respect to the $L^2(B)$ norm).
Now, $P_n$ is a degree $n$ polynomial, thus $P_n^{(k)} = 0$ if $n = k, k+ 1, \dots$. The spectrum of the restriction $-\bar L$ to the Hilbert subspace $E_k$ generated by $P_k , P_{k+1}, \dots$ has lower bound $k(k+1)$. 
Thus the inequality \eqref{ineq} holds because $ f^{(k-1)}$ belongs to $E_{k-1}$. 
\end{proof}
Let us consider the case $k=2$ in Corollary \ref{cor:ent}. With $f\in S(\RR)$, we have
\[
S\big([f]_1|\!|H_{(2)}(B)\big)= \pi\int_B(1-x^2)f'(x)^2dx - 2\pi \int_B f(x)^2dx\, ,
\]
provided $f$ has zero mean on $B$, namely $\frac12\int_B f(x)dx =0$. For an arbitrary $f\in S(\RR)$, $\bar f \equiv f -  {\frac12}\int_B f(x)dx$ has zero mean and $[\bar f]_1 = [f]_1$, therefore
\begin{align}\label{Sf2}
S\big([f]_1|\!|H_{(2)}(B)\big) &= \pi\int_B(1-x^2)\bar f'(x)^2dx - 2\pi \int_B \bar f(x)^2dx\\
&= \pi\int_B(1-x^2)f'(x)^2dx  - 4\pi {\rm Var}_B(f)\, ,
\end{align}
where
${\rm Var}_B(f) =  \frac12 \int_B f(x)^2dx - \big(\frac12 \int_B f(x)dx\big)^2$ is the variance of $f$ on the probability measure space $(B, dx/2)$. 
One can generalize this expression in the higher derivative case by renormalizing $f$ with higher moments. 

We conclude this section by generalizing Corollary \ref{cor:ent} to the case of the interval $B_R = (-R,R)$, with any $R>0$. 
\bcor\label{cor:ent2}
Let $f\in C_0^{\infty}(\RR)$ with $\int_{-R}^R x^n f(x)dx = 0$, $n = 0, 1, \dots, k-2$. 
The entropy of $[f]_1$ w.r.t. $H_{(k)}(-R,R)$ on $\H^{(1)}$ is given by
\[
S\big([f]_1|\!|H_{(k)}(-R,R)\big)= \frac{\pi}{R} \int_{-R}^R(R^2- x^2)f'(x)^2dx - \frac{\p}{R} k(k-1)\int_{-R}^R f(x)^2dx\, .
\]
\ecor
\proof
Let $\d$ be one-parameter dilation subgroup of $\Mob$, $\d(s) : x\mapsto e^{-s} x$, $x\in\RR$. Then $U^{(1)}\big(\d(s)\big) H^{(k)}(B) = H^{(k)}(B_R)$, $R = e^s$.  Therefore,
\begin{align*}
S\big([f]_1|\!|H_{(k)}(B_R)\big) &= S\big([f]_1|\!| U^{(1)}\big(\d(s)\big) H_{(k)}(B)\big) \\
&= S\big( U^{(1)}\big(\d(-s)\big) [f]_1|\!|  H_{(k)}(B)\big) \\
&= S\big( [f_{s}]_1|\!|  H_{(k)}(B)\big) \, ,
\end{align*}
with $f_s(x) = f(Rx)$. 
By Corollary \ref{cor:ent}, we then have
\begin{align*}
S\big([f]_1|\!|H_{(k)}(B_R)\big) &= \pi\int_B(1-x^2)f_s'(x)^2dx - \pi k(k-1)\int_B f_s(x)^2dx \\
&= \pi R^{2}\int_B(1-x^2)f'(Rx)^2dx - \pi k(k-1)\int_B f(Rx)^2dx \\
&= \frac{\pi}{R} \int_{B_R}(R^2- x^2)f'(x)^2dx - \frac{\pi}{R} k(k-1)\int_{B_R} f(x)^2dx \, .
\end{align*}
\eproof
As a consequence, for every $f\in C_0^\infty(\RR)$, the average entropy converges as $R\to +\infty$
\[
\frac{S\big([f]_1|\!|H_{(k)}(B_R)\big)}R \longrightarrow \pi\int_{-\infty}^{+\infty}f'(x)^{2}dx
\]
 (the average moment normalization vanishes at infinity), and is independent of $k$, as one may expect since the half-line standard subspaces are independent of $k$. 

{

\section{Locality and vector classes}\label{classes}
We now illustrate how our discussion is valid in more generality. 
\subsection{Entropy of a vector class}\label{Evc}
Let $\H$ be a complex space and $H\subset \H$ a standard subspace. The associated \emph{entropy operator} $\E_H$ is defined by the closure of
{
\ben\label{EH2}
\E_H =  A(\Delta_H) + J_H  B(\Delta_H)\, ,
\een
where $A(\l) \equiv - a(\l)\log\l $ and  $B(\l) \equiv b(\l)\log \l$ (with $A(1) = -1$, $B(1) = -1$). More precisely, $\E_H$ is the closure of the right-hand side of \eqref{EH2}. }

Notice that, if $H$ is factorial, then $\E_H$ is the closure of $iP_H i \log\Delta_H$, due to eq. \eqref{PD}. 

$\E_H$ is a real linear, selfadjoint, positive operator (w.r.t. the real part of the scalar product). 

The entropy of a vector $\phi\in\H$ with respect to $H$, defined by \eqref{Sf}, is equivalently given by
\[
S(\phi |\!| H) = \Re(\phi, \E_H  \phi) \, ,
\]
in the quadratic sense, namely
\[
S(\phi |\!| H) = ||\E_H^{1/2} \phi ||^2\, ,
\]
if $\phi$ belongs to the domain of $\E_H^{1/2}$, and $S(\phi |\!| H) = +\infty$ otherwise. 
\bprop\label{locality}
$S(\phi |\!| H) = 0$ iff $\phi\in H'$. That is, $\ker(\E_H^{1/2}) = H'$. 
\eprop
\proof
Of course, $\ker(\E_H^{1/2}) = \ker(\E_H)$ and we first show that $H' \subset \ker(\E_H)$. We begin by noticing that $H'\cap D(\E_H)$ is dense in $H'$, where $D(\E_H)$ is the domain of $\E_H$. Indeed, $E_\e H'\subset H'$ if $E_\e$ is the spectral projection of $\Delta_H$ relative to the interval $(\e^{-1}, \e)$, for all $\e > 1$, and $E_\e \to 1$ as $\e \to +\infty$ strongly (see \cite{LM23}). 

With $\phi'\in H'\cap D(\E_H)$, taking into account that $J_H B(\Delta_H) =  B(\Delta^{-1}_H) J_H$ and $B(\l) = B(\l^{-1})$, we have
{\begin{align*}
\E_H \phi' &=  A(\Delta_H)\phi' + J_H  B(\Delta_H)\phi' \\
&= A(\Delta_H)S_{H'}\phi' +   J_H  B(\Delta_H)\phi'\\
&= A(\Delta_H)\Delta^{1/2}_H J_H\phi' +   J_H  B(\Delta_H)\phi'\\
&= - B(\Delta_H) J_H\phi' +   J_H B(\Delta_H)\phi' = 0 \, ,
\end{align*} }
therefore $\phi' \in \ker(\E_H)$. Since the kernel of a closed linear operator is a closed subspace, we have $H' \subset \ker(\E_H)$.

Conversely, let us show now that $\ker(\E_H) \subset H'$. Equivalently, we have to show that
\[
 i H = (H')^{\perp_\RR} \subset \ker(\E_H)^{\perp_\RR} = \overline{R(\E_H)}\, ,
\]
where $R(\cdot)$ denotes the range. So, it suffices to check that a dense subspace of $H$ is contained in $\overline{R(i\E_H)}$. We first assume that $H$ is factorial, then 
\[
i\E_H |_H  = i \log\Delta_H |_H
\]
is the generator of the one-parameter group of orthogonal operators $\Delta^{it}_H|_H$, so $R(i \log\Delta_H |_H)$ is dense in $H$ because 
$i \log\Delta_H |_H$ is skew-selfadjoint and $\ker(i \log\Delta_H |_H) = \{0\}$. On the other hand, if $H$ is abelian, then $\frac12\E_H$ is the orthogonal projection onto $iH$ \cite{BCD22}, as follows directly from the definition \eqref{EH2}; so $\overline{R(i\E_H)} \supset H$. 

The proposition is now proved because, in general, $H$ is the direct sum of a factorial and an abelian standard subspace (see Sect. \ref{S2}). 
\eproof
\bcor
$S(\phi |\!| H) = S(\phi + \psi|\!| H)$ if $\psi\in H'$. 
\ecor
\proof
Immediate because
\begin{align*}
 S(\phi + \psi|\!| H) &=  \Re(\E_H^{1/2}\phi, \E_H^{1/2}\phi)  + \Re(\E_H^{1/2}\psi, \E_H^{1/2}\psi) + \Re(\E_H^{1/2}\phi, \E_H^{1/2}\psi) + \Re(\E_H^{1/2}\psi, \E_H^{1/2}\phi) \\
 &=  \Re(\E_H^{1/2}\phi, \E_H^{1/2}\phi) = S(\phi |\!| H) 
\end{align*}
is zero if $\psi \in H' = \ker(\E_H^{1/2})$. 
\eproof
We now generalize the notion of entropy of a vector. We shall consider a standard subspace $H$ of a complex Hilbert space $\H$. One can then immediately deal with a general, real linear subspace of $\H$ by projecting onto the standard subspace component of its closure. 

We consider the elements of the quotient space $\H/H'$. Of course, every vector $\phi\in \H$ gives an element $\tilde\phi \equiv \phi +H'$. 
Given $\xi \in \H/H'$, we define the \emph{entropy of the vector class} $\xi$ in $H$ by
\[
S(\xi |\!| H) \equiv S(\phi |\!| H) 
\]
with $\phi\in \H/H'$ any vector such that $\tilde \phi = \xi$. By the locality property, $S(\xi |\!| H)$ is well-defined and strictly positive iff $\xi\neq 0$. 

\subsection{More on the $U(1)$-current and conformal nets}
\label{sectf}
Let $f$ be a real, smooth function on $\bar B$. We choose a real function $g\in S(\RR)$ extending $f$ to $\RR$, then we get a vector $[g]_1\in \H^{(1)}$. If $h$ is another extension of $f$, then $g-h$ is supported in $\RR\setminus B$, thus  $[g]_1 - [h]_1 = [g - h]_1$ 
belongs to $H^{(1)}(B)'$. So
\[
\xi_f \equiv \tilde{[g]_1}
\]
is vector class of $\H^{(1)}$ relative to $H^{(1)}(B)$ that does not depend on the choice of the extension $g$ of $f$. By Cor. \ref{SH1}, we have
\[
S\big(\xi_f |\!| H^{(1)}(B)\big)={\pi}\int_B(1-x^2)f'(x)^2dx\, .
\]
In particular, let $\ell \in C_0^\infty(\RR)$ and $L(x) = \int_{-\infty}^x \ell(t)dt$. Then $\ell$ defines a localized automorphism $\a_\ell$ on the local net of von Neumann algebras on the Bose Fock space over $\H^{(1)}$ \cite{BMT88}. So we have here the formula obtained in \cite{L20}
\[
S(\f\cdot\a^{-1}_\ell |\!| \f)  = S\big(\xi_L |\!| H^{(1)}(B)\big) =  {\pi}\int_B(1-x^2)\ell(x)^2dx\, ,
\]
where $\xi_L$ is the vector class of $\H^{(1)}$, relative to $H^{(1)}(B)$, associated with $L|_{\bar B}$,
and the left term is Araki's relative entropy between the vacuum state $\f$ and the charged state $\f\cdot\a^{-1}_\ell$ on the local von Neumann algebra associated with $B$ in second quantization. 

We may also consider the entropy of $\xi_f$ relative to $H_{(2)}$
\[
S\big(\xi_f |\!| H_{(2)}(B)\big) \equiv S\big([g]_1 |\!| H_{(2)}(B)\big) \, ,
\]
that, again, does not depend on the choice of the extension $g$ of $f$. With $\ell$ and $L$ as above, we have
\[
S\big(\xi_L |\!| H_{(2)}(B)\big) =  \pi\int_B(1-x^2)\ell(x)^2dx  - 4\pi {\rm Var}_B(L) \, ,
\]
so the entropy increase of the localized automorphism $\a_\ell$ on $B$ is given by the variance of $L$ on $B$:
\[
S\big(\xi_L |\!| H_{(1)}(B)\big) - S\big(\xi_L |\!| H_{(2)}(B)\big) =  4\pi {\rm Var}_B(L) \,  . 
\]
We end this section by showing an analog of the strict inclusion \eqref{sincl} in the conformal case. Let $\A$ be a non-trivial diffeomorphism covariant, local conformal net on $S^1$ on a Hilbert space $\H$. In particular, $\A$ is a local, Möbius covariant net on $S^1$, and we consider the associated local, Möbius covariant net $H$ of standard subspace on $\RR$:
\[
H_\A(I) \equiv \overline{\A(I)_{\rm sa}\Omega}\, , \quad I\in \I_0\, ,
\]
with $\Omega$ the vacuum vector. 
\bprop
With $\A$ as above, the net $H_\A$ is not a dual net, namely $H_\A(I) \nsubseteq H_\A^d(I)$, $I\in \I_0$. 
\eprop
\proof
Let $U$ be the covariance unitary, projective representation of the diffeomorphism group $\Diff(S^1)$ associated with $\A$. The restriction $U|_\Mob$ is the direct sum of irreducible, positive energy, unitary representations of $\Mob$, $U = \bigoplus_k^\infty n_k U^{(k)}$, with multiplicity $n_k$. Representations $U^{(k)}$ with $k>2$ appear in this direct sum with multiplicity $n_k \geq 1$ (see e.g. \cite{KRR13}). 
$H$ is thus the direct sum 
\[
H_\A = \bigoplus_k^\infty n_k H^{(k)}
\]
of the nets $H^{(k)}$. The proposition is proved because $H^{(k)}$ is a dual net iff $k=1$. 
\eproof
Note that if $\A$ is rational, then Haag duality on $\RR$ holds for $\A$ \cite[Sect. 5]{LX04} (the split property holds by \cite{MTW18}). 

\medskip

\noindent
{\bf Acknowledgements.} 
The authors acknowledge the Excellence Project 2023-2027 MatMod@TOV awarded to the Department of Mathematics, University of Rome Tor Vergata. {They} are supported by GNAMPA-INdAM.
R.L.  {thanks  D. Buchholz and K.H. Rehren for} stimulating conversations during his visit to Göttingen in November 2021.

\end{document}